\documentclass[11pt,a4paper]{article}
\usepackage[T1]{fontenc}
\usepackage[utf8]{inputenc}
\usepackage[margin=27mm]{geometry}
\usepackage{amsmath,amssymb,amsthm,mathtools,booktabs,microtype}
\usepackage[colorlinks=true,linkcolor=blue,citecolor=blue,urlcolor=blue]{hyperref}
\hypersetup{pdftitle={The minimum number of vertices of a triangulation of RP5},pdfauthor={Xinrui Chang}}
\allowdisplaybreaks
\newtheorem{theorem}{Theorem}[section]
\newtheorem{proposition}[theorem]{Proposition}
\newtheorem{lemma}[theorem]{Lemma}
\newtheorem{corollary}[theorem]{Corollary}
\theoremstyle{definition}
\newtheorem{definition}[theorem]{Definition}

\newcommand{\F}{\mathbb F_2}
\newcommand{\RP}{\mathbb{RP}}
\newcommand{\Tr}{\operatorname{Tr}}
\newcommand{\st}{\operatorname{st}}
\newcommand{\lk}{\operatorname{lk}}
\newcommand{\supp}{\operatorname{supp}}
\newcommand{\sd}{\operatorname{sd}}
\newcommand{\ip}[2]{\langle #1,#2\rangle}
\title{The minimum number of vertices of a triangulation of $\RP^5$}
\author{Xinrui Chang\\\small Fudan University\\\small\texttt{25800180002@m.fudan.edu.cn}}
\date{}
\begin{document}
\maketitle

\begin{abstract}
We give a computer-assisted proof that every finite simplicial triangulation of real projective five-space has at least twenty-four vertices. Together with the construction of Guyer, Steinerberger, and Yang, this determines the minimum. A cup-product argument produces an induced subcomplex on sixteen or seventeen vertices. Transfer to the spherical double cover then imposes a covering condition on its lifted faces. In the sixteen-vertex case, a direct capacity bound excludes twenty-two vertices. For twenty-three vertices, a cochain extension argument reduces every relevant completion to one of 115 critical signed complexes; integer weights and exact covering certificates exclude all of them. The argument requires no assumption that the triangulation is combinatorial.
\end{abstract}

\section{Introduction}
For a compact triangulable space $M$, let
\[
 \nu(M)=\min\{|V(K)|:K\text{ is a finite simplicial complex and }|K|\cong M\}.
\]
Arnoux and Marin~\cite[\S\S20--21, pp.~203, 207--208]{AM} proved that
\begin{equation}\label{eq:AM}
 \nu(\RP^d)\ge\binom{d+2}{2}+1\qquad(d\ge3).
\end{equation}
In particular, their result excludes every triangulation of $\RP^5$ with at most twenty-one vertices. Guyer, Steinerberger, and Yang~\cite{GSY} constructed a triangulation with twenty-four vertices as the antipodal quotient of the boundary of a centrally symmetric simplicial six-polytope. Thus the remaining vertex numbers are twenty-two and twenty-three.

\begin{theorem}\label{thm:main}
Every finite simplicial complex whose realization is homeomorphic to $\RP^5$ has at least twenty-four vertices. Consequently,
\[
 \nu(\RP^5)=24.
\]
\end{theorem}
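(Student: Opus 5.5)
The plan follows the abstract. By \eqref{eq:AM} and the construction of \cite{GSY}, it suffices to exclude a triangulation $K$ of $\RP^5$ with $n\in\{22,23\}$ vertices. Let $a\in H^1(K;\F)$ be the generator, represented by a simplicial cocycle $\alpha$. Then $a^5\neq0$ and the double cover $\tilde K\to K$ is a simplicial $5$-sphere with a free involution. For a vertex set $U$, write $K[U]$ for the induced subcomplex.

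\textbf{Step 1: an induced subcomplex on sixteen or seventeen vertices.} For any partition of $V(K)$ into blocks $V_0,\dots,V_5$, the restriction of $a$ to each $K[V_i]$ can be checked for vanishing. Since $a^6=0$ but $a^5\ne0$, the standard Lusternik--Schnirelmann argument shows: if $a$ vanishes on $K[V_i]$ for $i\ne j$, then $a^5$ restricts nontrivially to $K[V_j]$. A block on which $a$ vanishes spans a subcomplex whose lift to $\tilde K$ is a trivial double cover. Using the Arnoux--Marin type estimate, a set on which $a$ vanishes can have at most about $d+1$ vertices in a suitable sense. Greedily choosing five such sets of sizes about $1,2,\dots$ leaves a complement $W$ with $|W|\le n-6$ or so. I expect the count to give $|W|\in\{16,17\}$ for $n\in\{22,23\}$, with $K[W]$ carrying $a^5\neq0$ on the preimage.

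\textbf{Step 2: transfer to $\tilde K$ and a covering condition.} Lift $K[W]$ to $\tilde W\subset\tilde K$. The nonvanishing of $a^5$ translates, via the transfer $H^*(\tilde K)\to H^*(K)$ and the Gysin sequence, into a statement that the fundamental class of the sphere $\tilde K$ is supported on the lifted $5$-faces of $K[W]$. In particular, every antipodal pair of top faces and every lifted face must be "covered" by stars of vertices in $W$. This gives an integer linear covering condition on face counts of $K[W]$, together with the Dehn--Sommerville and manifold constraints (links of $4$-faces, $\Delta$-bounds on face numbers from the $5$-manifold structure).

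\textbf{Step 3: case $n=22$.} Here $|W|=16$ is forced. I would derive a capacity inequality: each vertex outside $W$ can contribute only boundedly many faces to the covering, since its link is a $4$-sphere homology manifold on at most $21$ vertices, and the upper bound theorem limits its face count. With only six outside vertices, the total capacity falls short of the demand computed in Step 2. This gives a contradiction by explicit arithmetic.

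\textbf{Step 4: case $n=23$, the main obstacle.} Now the slack allows $|W|\in\{16,17\}$, and no pure counting bound suffices. I would set up a cochain extension problem. The cocycle $\alpha$ restricted to $K[W]$ must extend over the remaining vertices consistently with the covering condition. Modulo symmetries, the relevant restrictions should be enumerated by computer, recording each as a signed complex: faces with the sign of $\alpha$ on edges. Minimal obstructions are pruned down to a finite list of critical signed complexes, which the abstract says number 115. For each critical complex, I would find nonnegative integer weights on faces by LP and rounding, giving an exact certificate that the covering demand exceeds the capacity. These certificates are then verified in exact arithmetic. The hardest point is guaranteeing completeness of the reduction to critical complexes, because every completion must dominate one of them. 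I would argue this by monotonicity: adding faces only increases demand. Throughout, only homology-manifold properties of links are used, so no combinatorial (PL) hypothesis is needed.
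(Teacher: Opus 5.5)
Your outline follows the abstract, but each step that carries the proof is either misstated or replaced by an estimate that cannot work.

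\textbf{Reduction and the case $n=22$.} In Step~1, the deletion (Lusternik--Schnirelmann) lemma lowers the exponent by one for each block on which $a$ vanishes. With $a$ vanishing on five blocks you learn only $a^0\neq0$ on the sixth, i.e.\ that it is nonempty. Moreover, no complex on $16$ or $17$ vertices carries a nonzero fifth power, since $m_5\ge23$. The correct reduction deletes a single $5$-simplex $A$ and leaves $J=K[V(K)\setminus A]$ on $n-6$ vertices with $a^4|_J\neq0$.

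Step~3 then fails at its key estimate. Six outside vertices must account for all $300$ lifted facets, so each must cover fewer than $300/6=50$. The upper bound theorem already allows $156$ facets for a $4$-sphere on sixteen vertices, so it cannot give this. Three ingredients are missing:
\begin{itemize}
\item[(i)] \emph{Rigidity.} A sixteen-vertex complex with nonzero fourth power is exactly $E$, and $Z_4(E)=\F\gamma_E$. Hence $J=E$, and all $150$ facets lie in the only positive cycle.
\item[(ii)] \emph{The covering mechanism.} Your Step~2 puts the covering on the wrong side; it is not by stars of vertices in $W$, and ``lifted $5$-faces of $K[W]$'' is empty because $E$ is $4$-dimensional. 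Instead, a chain $c$ with $\partial c=\Tr z$ and $c+\tau c=\Gamma$ is constant on the star of each lift of a vertex \emph{outside} $W$, by local connectivity of manifold stars. So each outside vertex yields a transversal column $t\colon W\to\F$. The coefficient-one cofacet of either lift of a facet of $E$ then forces some outside column to contain that lift.
\item[(iii)] \emph{The computed capacity.} No column contains more than $48$ of the $300$ lifts, which gives $300\le288$.
\end{itemize}

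\textbf{The case $n=23$.} The classification ($244$ classes, $115$ critical) covers only seventeen-vertex signatures containing a balanced six-set. To guarantee one, you need the facet-intersection lemma: $23<2^5+5$ forces two disjoint $5$-simplices $A,C$ in a positive five-cycle, and $C$ survives deletion of $A$.

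Completeness of the reduction to critical classes is not a monotonicity fact. You must exclude completions in which deleting some vertex $v$ keeps $[s]^4\neq0$. The paper handles this as follows:
\begin{itemize}
\item the restriction is then $E$;
\item split $z=z_0+v*\partial z_0$, and use the cochain extension together with $H^1(E)=\F[s]$ to obtain $s|_M=\delta t|_M$ for $t(u)=a(vu)$;
\item the opposite pair $t,t+1$ plus the six outside columns then cover all $300$ lifts with eight columns, whereas any covering containing an opposite pair needs nine.
\end{itemize}

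Finally, the paper's weighted capacity certificates settle only $104$ of the $115$ classes. The other eleven require an exhaustive exact-cover search that also uses the absence of balanced seven-sets in completions. That absence is itself derived from the fact that, for $n=23$, no positive four-cycle is supported on sixteen vertices (covering number $8>7$). As written, your plan stalls at each of these points.
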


The lower bound is proved by a finite classification and exact certificates. The twenty-two-vertex case uses only the unique signed complex on sixteen vertices with nonzero fourth cup power and a capacity bound of 48. The twenty-three-vertex case requires stronger covering obstructions and a classification of seventeen-vertex completions containing a six-vertex face. We give the topological reductions in full and specify the finite assertions and their verification below. All homology and cohomology groups have coefficients in $\F$.

\section{Signed complexes and cup powers}
For $W\subseteq V(K)$, write $K[W]$ for the induced subcomplex. Fix an ordering of the vertices. If $a\in Z^1(K;\F)$, its simplicial cup power is represented by
\begin{equation}\label{eq:cup}
 a^r[v_0,\ldots,v_r]=\prod_{i=0}^{r-1}a(v_iv_{i+1}).
\end{equation}
By the simplicial--singular comparison and the universal coefficient theorem over $\F$, $[a]^r\ne0$ if and only if some $z\in Z_r(K;\F)$ satisfies $\ip{a^r}{z}=1$. We call such a cycle positive.

\begin{definition}
A signature on $V$ assigns $s_{uv}\in\F$ to each unordered pair of distinct vertices. A subset $F\subseteq V$ is balanced if $s_{uv}+s_{vw}+s_{uw}=0$ for every triple in $F$. The balanced subsets form a simplicial complex $B(s)$. A switching replaces $s_{uv}$ by $s_{uv}+h_u+h_v$ for a function $h:V\to\F$. Signatures are equivalent if they differ by a switching and a permutation of vertices.
\end{definition}

A nonempty balanced set has exactly two potentials $t:F\to\F$ satisfying $s_{uv}=t_u+t_v$, related by complementation. These potentials describe its two lifts in the double cover associated with $s$. For a cocycle $a$ on $K$, assign arbitrary values to missing edges to obtain a signature $s$. Then
\begin{equation}\label{eq:completion}
 K\subseteq B(s),\qquad [a]^r\ne0\ \Longrightarrow\ [s]^r\ne0.
\end{equation}
The implication follows by restriction of cochains. Such a signature is called a compatible completion. Balanced faces in a completion need not be faces of $K$.

\begin{lemma}[Deletion]\label{lem:deletion}
Let $V(K)=A\sqcup D$, and let $\alpha\in H^1(K;\F)$. If $\alpha^r\ne0$, $r\ge2$, and $\alpha|_{K[A]}=0$, then $\alpha^{r-1}|_{K[D]}\ne0$.
\end{lemma}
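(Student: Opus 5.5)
Since $\alpha|_{K[A]}=0$, the plan is to choose a cocycle representative $a$ of $\alpha$ that vanishes identically on $K[A]$. Then every simplex on which some $a(v_iv_{i+1})$ is nonzero must contain an edge leaving $A$. From this, the $r$-th power will be seen to factor through $K[D]$. The cleanest route is the relative cup product together with the long exact sequences of pairs.

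\textbf{Step 1.} Write $U=|K|\setminus|K[D]|$. The subcomplex $K[A]$ is a deformation retract of $U$ by the standard straight-line retraction: a point of a simplex $\sigma$ not lying in $K[D]$ has positive barycentric weight on $\sigma\cap A$. Likewise $K[D]$ is a deformation retract of $|K|\setminus|K[A]|$. By the hypothesis and this homotopy equivalence, $\alpha$ restricts to zero on $U$. The exact sequence of the pair $(|K|,U)$ then lifts $\alpha$ to a class $\tilde\alpha\in H^1(|K|,U)$.

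\textbf{Step 2.} Suppose, for contradiction, that $\alpha^{r-1}|_{K[D]}=0$. Since $|K|\setminus|K[A]|$ retracts to $K[D]$, the class $\alpha^{r-1}$ restricts to zero on $U'=|K|\setminus|K[A]|$. It therefore lifts to $\beta\in H^{r-1}(|K|,U')$. The sets $U$ and $U'$ are open, and $U\cup U'=|K|$, because no simplex meets both $|K[A]|$ and $|K[D]|$ in a single point. The relative cup product
\[
H^1(|K|,U)\otimes H^{r-1}(|K|,U')\to H^r(|K|,U\cup U')=H^r(|K|,|K|)=0
\]
is defined for open sets in singular cohomology. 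The product $\tilde\alpha\smile\beta$ maps to $\alpha\smile\alpha^{r-1}=\alpha^r$ in $H^r(|K|)$. Hence $\alpha^r=0$, which contradicts the hypothesis.

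\textbf{Step 3.} Alternatively, the argument can be done simplicially, which fits the paper's formula~\eqref{eq:cup}. Take $a$ vanishing on all edges of $K[A]$, and suppose $a^{r-1}|_{K[D]}=\delta c$ for some cochain $c$ on $K[D]$, extended by zero to $K$. Then $a^{r-1}-\delta c$ vanishes on all simplices contained in $D$. Consider $a\smile(a^{r-1}-\delta c)$ evaluated on $[v_0,\dots,v_r]$. The first factor forces $v_0v_1$ not to lie in $A$, and the second factor forces $v_1\dots v_r$ not to lie entirely in $D$. These do not directly conflict, so the simplicial version needs care; this is why I prefer the topological Step 2.

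The main obstacle is justifying the excisive cup product in Step 2, including the claim $U\cup U'=|K|$ and the two retractions. Both are standard for full subcomplexes on complementary vertex sets. The cohomology class $a\smile\delta c=\delta(a\smile c)$ is a coboundary, so it causes no trouble, but Step 2 avoids the issue entirely.
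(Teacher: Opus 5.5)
Your Step 2 is the paper's proof: your $U=|K|\setminus|K[D]|$ and $U'=|K|\setminus|K[A]|$ are exactly the paper's $U_A$ and $U_D$ (the points with positive $A$-, resp.\ $D$-, barycentric weight), and the argument uses the same deformation retractions and the same relative cup product into $H^r(|K|,U\cup U')=0$. The stated reason for $U\cup U'=|K|$ should be rephrased: what is needed is $|K[A]|\cap|K[D]|=\varnothing$, which holds because the two induced subcomplexes have disjoint vertex sets. Step 3 does not work as stated, as you noticed, and can simply be dropped.
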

\begin{proof}
Let $U_A$ be the open subset of $|K|$ on which the sum of the barycentric coordinates in $A$ is positive; define $U_D$ similarly. Coordinate normalization gives deformation retractions onto $|K[A]|$ and $|K[D]|$, respectively. If the conclusion failed, the two factors of $\alpha^r=\alpha\alpha^{r-1}$ would lift to $H^1(|K|,U_A)$ and $H^{r-1}(|K|,U_D)$. Their relative product lies in $H^r(|K|,U_A\cup U_D)=0$, a contradiction.
\end{proof}

\begin{lemma}[A facet intersection bound]\label{lem:hitting}
Suppose $a\in Z^1(K;\F)$ and $z\in Z_d(K;\F)$ satisfy $\ip{a^d}{z}=1$, where $d\ge1$. If a $d$-simplex $A$ meets every simplex in $\supp z$, then
\begin{equation}\label{eq:hitting}
 |V(K)|\ge 2^d+d.
\end{equation}
\end{lemma}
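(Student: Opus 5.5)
The plan is to prove the equivalent statement that at least $2^d-1$ vertices lie outside $A$, by induction on $d$. The target satisfies $f(d)=2f(d-1)+1$, which suggests splitting the outside vertices into two disjoint parts, each supporting a $(d-1)$-dimensional instance of the same configuration, plus one extra vertex.

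\emph{Normalization.} First replace $K$ by the closure of $\supp z$; this changes neither the hypotheses nor the conclusion. Since $A$ is a simplex, $a|_{K[A]}$ is a coboundary. After adding $\delta h$ to $a$ (which changes neither $[a]^d$ nor $\ip{a^d}{z}$), we may assume $a$ vanishes on every edge inside $A$. By \eqref{eq:cup}, a simplex $\sigma=[v_0,\dots,v_d]\in\supp z$ contributes to $\ip{a^d}{z}$ only if $a(v_iv_{i+1})=1$ for all $i$. Consecutive vertices of such a $\sigma$ therefore cannot both lie in $A$. Combined with the hitting hypothesis, every contributing simplex contains at least one vertex of $A$ and has no consecutive pair inside $A$.

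\emph{Base case $d=1$.} Here $z$ is a union of cycles, with $\ip{a}{z}=1$, and every edge of $\supp z$ meets the edge $A=uv$. Since $a(uv)=0$, the cycle carrying the odd value uses an edge other than $uv$. A cycle whose edges all meet $\{u,v\}$ then needs at least one vertex $w\notin A$. Hence $|V|\ge 3=2^1+1$.

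\emph{Inductive step.} Fix a vertex $p\in A$ and the opposite facet $A'=A\setminus\{p\}$. Apply the Deletion Lemma~\ref{lem:deletion} with the part $A$ removed. This gives $a^{d-1}\ne0$ on $K[D]$ with $D=V\setminus A$, but $D$ alone has no hitting simplex. So I would instead work at the chain level.
\begin{enumerate}
\item Take the slant (cap) of $z$ with the cochain $a$ along edges leaving $p$. More precisely, the part of $z$ consisting of simplices containing $p$ yields, via $\partial$ of the link of $p$ in $z$, a $(d-1)$-cycle $z_1$ with $\ip{a^{d-1}}{z_1}=1$.
\item Do the same with $p$ replaced by the set $A'$, producing a second cycle $z_2$.
\item Show that $z_1$ and $z_2$ have supports whose vertices outside $A$ are disjoint, apart from at most one shared vertex.
\item Show that in each, a $(d-1)$-simplex built from $A'$ (resp.\ from $p$ together with an outside vertex) meets all simplices.
\end{enumerate}
The induction then gives $2(2^{d-1}-1)+1$ vertices outside $A$.

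\emph{Main obstacle.} The hard step is constructing the two $(d-1)$-cycles so that both the hitting property and vertex-disjointness are inherited. The Deletion Lemma only guarantees nonvanishing of the cup power, not the hitting structure. I would first try the ordering trick: order the vertices with $A$ first. Then in a contributing simplex only $v_0$ can lie in $A$ (up to the alternating pattern). The face $[v_1,\dots,v_d]$ then lies in $K[D]$, and summing over $v_0$ gives the chains $z_1,z_2$ explicitly. Disjointness would then be obtained from the fact that $a\equiv1$ on the relevant edges, which forces the two parts of the $D$-vertices to be separated by the potential $a$.
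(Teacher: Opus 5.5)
Your normalization and base case are fine, but there is a genuine gap. The inductive step carries the whole content of the lemma, it is not supplied, and the constructions you sketch for it fail.

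\begin{itemize}
\item \emph{The link is not positive.} The link $\ell$ of $p$ in $z$ is already a cycle. But the star of $p$ is a cone, so $a|_\ell=\delta t$ with $t(u)=a(pu)$. Hence $\ip{a^{d-1}}{\ell}=0$ for $d\ge2$, and $\ell$ cannot serve as $z_1$.
\item \emph{The split pieces are not cycles.} With $A$ ordered first, $a\frown z=\sum_\sigma a(v_0v_1)[v_1,\ldots,v_d]$ is a positive $(d-1)$-cycle in $K[D]$; this is just chain-level deletion. Splitting it according to whether $v_0=p$ or $v_0\in A'$ does not give cycles. With $p$ ordered first, the $p$-part is $t\frown\ell$, and $\partial(t\frown\ell)=\delta t\frown\ell=a\frown\ell$, which is nonzero in general.
\item \emph{Your steps contradict each other.} The ordering trick puts every face $[v_1,\ldots,v_d]$ in $K[D]$. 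So no simplex built from $A'\subseteq A$ can meet these chains, as your step~4 requires. Also, ``$p$ together with an outside vertex'' is not a $(d-1)$-simplex once $d\ge3$.
\item \emph{There is no potential on $D$.} Disjointness cannot come from a potential for $a$ on $D$, because for $d\ge2$ the class $[a]$ is nonzero on $K[D]$.
\end{itemize}
More fundamentally, cap and deletion steps by themselves trade one dimension for about a simplex's worth of vertices. That is what produces the linear increments in \eqref{eq:recurrence}; nothing in them produces the doubling $f(d)=2f(d-1)+1$.

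The missing idea is a global Borsuk--Ulam-type input, which is how the paper argues.
\begin{itemize}
\item Pass to the characteristic double cover $X$ of the subcomplex generated by $A$ and $\supp z$. Cover $X$ by the closed stars $U_i^\pm$ of the $2(d+1)$ lifts of the vertices of $A$; this is where the hitting hypothesis enters.
\item Opposite stars are disjoint by unique lifting of edges. So the nerve $N$ is an antipodal subcomplex of the boundary of the $(d+1)$-dimensional cross-polytope, and there is an equivariant simplicial map $\sd X\to N$.
\item If a facet of the cross-polytope were missing from $N$, so would its antipode. Projecting away from a pair of opposite interior points of these facets then gives an equivariant map $X\to S^{d-1}$. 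Then $[a]^d$ would be pulled back from $\RP^{d-1}$ and would vanish.
\item Hence all $2^{d+1}$ intersections $\bigcap_iU_i^{\varepsilon_i}$ are nonempty, with pairwise disjoint vertex sets. The two constant patterns contain the two lifts of $A$, and each of the other $2^{d+1}-2$ contains a further vertex. This gives $2|V(K)|\ge 2^{d+1}-2+2(d+1)$.
\end{itemize}
Your recurrence does match the count $2^d-1$ of nonconstant sign patterns modulo antipodes, split by whether the pattern is constant on $A'$. So an inductive proof, if you want one, would have to be organized around these sign patterns (e.g.\ via a Tucker-type lemma), not around cap products of $z$.
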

\begin{proof}
Replace $K$ by the pure $d$-dimensional subcomplex generated by $A$ and $\supp z$. In its characteristic double cover $X$, write the two lifts of $A$ as $\{v_0^+,\ldots,v_d^+\}$ and $\{v_0^-,\ldots,v_d^-\}$. The closed stars $U_i^\pm=\st_X(v_i^\pm)$ cover $X$. Opposite stars have disjoint realizations, since a common vertex would contradict unique edge lifting.

Their nerve $N$ is therefore an antipodal subcomplex of the boundary of the $(d+1)$-dimensional cross-polytope. There is an equivariant simplicial map $\sd X\to N$: choose, equivariantly, a star containing each nonempty simplex of $X$. Along a chain of simplices, every chosen star contains the smallest simplex. No assertion about the homotopy type of the nerve is needed.

If a facet of the cross-polytope boundary were absent from $N$, its opposite would also be absent. After radial identification with $S^d$, choose opposite interior points $q,-q$ in these facets. Orthogonal projection to $q^\perp$, followed by normalization, defines an odd map $S^d\setminus\{q,-q\}\to S^{d-1}$. We would obtain an equivariant map $X\to S^{d-1}$. On quotients, the characteristic class would be pulled back from $\RP^{d-1}$, forcing $[a]^d=0$. For $d=1$, the quotient target is a point and the same conclusion holds.

Thus all $2^{d+1}$ intersections $\bigcap_iU_i^{\varepsilon_i}$ are nonempty. Each contains a vertex, and different intersections have disjoint vertex sets. The two constant sign intersections each contain all $d+1$ vertices of a lift of $A$. Hence
\[
 2|V(K)|\ge (2^{d+1}-2)+2(d+1)=2^{d+1}+2d.
\]
\end{proof}

The covering principle in this proof is classical; compare Bacon~\cite[property $V_n$ and Theorem~(2.23)]{Bacon}. Let $m_d$ denote the least number of vertices of a finite complex carrying a degree-one class with nonzero $d$th power. We have $m_1\ge3$ and $m_2\ge6$, the latter by Lemma~\ref{lem:hitting}. If $n<2^d+d$, a positive $d$-cycle contains two disjoint $d$-simplices. Applying Lemma~\ref{lem:deletion} twice gives
\begin{equation}\label{eq:recurrence}
 m_d\ge\min\{2^d+d,\ 2(d+1)+m_{d-2}\}\quad(d\ge3).
\end{equation}
In particular $m_3\ge11$, $m_4\ge16$, and $m_5\ge23$. We nevertheless give a separate capacity proof for twenty-two vertices in Proposition~\ref{prop:22}, since it is the first instance of the obstruction used for twenty-three vertices.

\section{Finite signed complexes}\label{sec:finite}
The following finite statements use the distinguished cocycle supplied by the signature, rather than an unspecified element of $H^1$. A signature is \emph{critical in degree four} if its fourth power is nonzero and becomes zero after deleting any vertex.

\begin{lemma}[Finite classification]\label{lem:catalogue}
Up to equivalence, the following assertions hold.
\begin{enumerate}
\item On six vertices there is one signature with $[s]^2\ne0$. Denote it by $D$; on $\{0,\ldots,5\}$ its negative edges may be taken as $13,14,24,25,35$.
\item On eleven vertices there are four signatures with $[s]^3\ne0$, each containing a balanced five-set.
\item On sixteen vertices there is one signature with $[s]^4\ne0$. Its balanced complex $E$ is pure of dimension four, with
\[
 f(E)=(16,120,330,375,150),\qquad H^1(E)=\F[s],\qquad Z_4(E)=\F\gamma_E,
\]
where $\gamma_E$ is the sum of all facets and $\ip{s^4}{\gamma_E}=1$. Each ridge has two cofacets. For every face, the facets containing it are connected by ridge adjacency through that face.
\item On seventeen vertices there are 244 signatures with $[s]^4\ne0$ and a balanced six-set. Exactly 115 are critical in degree four.
\end{enumerate}
\end{lemma}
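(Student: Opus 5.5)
This lemma is a finite statement about signatures on at most seventeen vertices, so I would prove it by exhaustive computer enumeration, organised so that each orbit under switching and permutation is generated once and each cup-power test is an exact linear-algebra computation over $\F$.

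\textbf{Enumeration.} A signature up to switching is a two-graph, i.e.\ a switching class of graphs. So I would enumerate switching classes on $n$ vertices up to isomorphism. On six vertices this is trivial. On eleven, sixteen and seventeen vertices the full set of two-graphs is far too large, so I would prune using necessary conditions.

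\textbf{Pruning via Lemma~\ref{lem:hitting} and~\eqref{eq:recurrence}.} Take $n=2^d+d$ with $d=2,3,4$. A signature with $[s]^d\ne0$ has a positive $d$-cycle. The proof of Lemma~\ref{lem:hitting} then forces every one of the $2^{d+1}$ sign patterns relative to a lift of a balanced $(d+1)$-set to be realised by exactly one vertex, apart from the two constant patterns. This is because equality holds in the count $2n=2^{d+1}+2d$. In particular $B(s)$ contains a $d$-simplex $A$, and the remaining $2^d-1$ vertices are in bijection with the nonconstant sign vectors on $A$ modulo global complement.

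With $A$ fixed, the signs $s_{uv}$ between $A$ and the other vertices are therefore determined up to switching. Only the signs among the $2^d-1$ outside vertices remain free. That gives $2^{\binom{2^d-1}{2}}$ candidates, which is small for $d=2,3$ but large for $d=4$ ($2^{105}$). For $d=4$ I would instead build recursively. By Lemma~\ref{lem:deletion}, deleting a balanced five-set on which $s$ is trivial leaves an eleven-vertex signature with nonzero cube. Extending the four eleven-vertex classes of item~2 by five vertices is a tractable search, with canonical augmentation (for example nauty on the associated graph) to remove isomorphs.

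\textbf{Seventeen vertices.} For item~4, a balanced six-set $F$ exists by hypothesis. Switch so that $s$ vanishes on $F$. Lemma~\ref{lem:deletion} then gives $[s]^3\ne0$ on the remaining eleven vertices, so these restrict to one of the four classes of item~2. The search becomes gluing six vertices to each of those four classes, followed by isomorph rejection. This should produce the 244 classes, and testing all seventeen vertex deletions identifies the 115 critical ones.

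\textbf{Verification of each candidate.} For each candidate I would compute $B(s)$ as the clique complex of the balanced triples. I would then decide $[s]^r\ne0$ by solving over $\F$ for a cycle $z\in Z_r(B(s))$ with $\ip{s^r}{z}=1$. This is a single rank comparison between $\partial_r$ and $\partial_r$ augmented by the row $s^r$. For the sixteen-vertex complex $E$, I would compute the $f$-vector, $H^1$, $Z_4$, the ridge degrees and the connectivity of the facet graphs of all links directly.

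\textbf{Main obstacle.} I expect the hard step to be the completeness of the sixteen- and seventeen-vertex searches, that is, justifying that the pruning loses no classes. I would first check the deletion-based reduction carefully, including the existence of a balanced five-set on which $s$ is trivial after switching. I would then cross-check by an independent method, such as the sign-pattern bijection above combined with SAT solving.
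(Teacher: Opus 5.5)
\textbf{Gap 1: the eleven-vertex reduction is unjustified.} Your pruning step is only valid for $d=2$. The equality analysis in the proof of Lemma~\ref{lem:hitting} needs that lemma's hypothesis: a $d$-simplex $A$ meeting every simplex of $\supp z$. Nothing supplies this at $d=3$. The other branch of~\eqref{eq:recurrence} is a positive three-cycle containing two disjoint tetrahedra, with an unbalanced triangle left over, and it also totals exactly $4+4+m_1=11$ vertices. So your $2^{21}$ enumeration of ``tetrahedron plus seven distinct nonconstant columns'' is not shown to contain every eleven-vertex class. Items 3 and 4, which you build on item 2, inherit this gap.

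For $d=4$ the framework is misapplied outright. Since $2^4+4=20\ne16$, your count $2^{105}$ is for twenty vertices. At sixteen vertices, Lemma~\ref{lem:hitting} says precisely that \emph{no} positive four-cycle has a hitting simplex.

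The paper instead splits item 2 on whether a balanced five-set exists. If it does, its complement is $D$ by Lemma~\ref{lem:deletion} and item 1. One then enumerates five columns of length six, excluding repeated columns by folding and $m_3\ge11$. If it does not, every ten-vertex representative without a balanced five-set is extended by one vertex, and the cube is checked to vanish.

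\textbf{Gap 2: the searches for items 3 and 4 are not feasible as described.} Your recursions are sound in principle, but they omit the idea that makes them finite in practice. Attaching a zero-switched balanced five-set to an eleven-vertex class leaves about $2^{54}$ column assignments per class. Attaching a six-set leaves about $2^{65}$. Canonical augmentation alone does not tame this.

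The paper applies item 2 a second time. The eleven-vertex complement of $B_1$ is itself $D\sqcup B_2$, with $B_2$ a balanced five-set. Deleting $B_2$ then shows that $D\sqcup B_1$ is also cube-positive. So both restrictions are among the 140 labelled templates over a common $D$. That leaves only 25 unknown cross edges, further constrained by forbidding balanced six-sets and requiring every balanced five-set to have a positive complement.

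On seventeen vertices the same device gives $D\sqcup A\sqcup B$ with 30 cross edges. Here $D\sqcup A$ is a twelve-vertex template whose columns are distinct or have one repeated pair, again by folding. Without this three-block decomposition, the searches you describe are neither specified nor feasible.
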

\begin{proof}[Finite verification]
Faces and cup values are reconstructed directly from the signatures. If $\partial_r$ is the boundary matrix and $c_r$ is the vector in~\eqref{eq:cup}, the certificates for nonvanishing and vanishing are, respectively,
\begin{equation}\label{eq:cert}
 \partial_rz=0,\quad c_r^Tz=1;\qquad \partial_r^Tb=c_r.
\end{equation}
An equivalence is checked on every edge. The exhaustive reductions are as follows.

For the small catalogues, extend each signature by one vertex, switch its edge to vertex zero to zero, and enumerate every assignment on the other new edges. Explicit equivalences to the next catalogue verify coverage. The numbers of classes on three through ten vertices are
\[
 2,\ 3,\ 7,\ 16,\ 54,\ 243,\ 2038,\ 33120.
\]
On six vertices direct evaluation gives the first assertion.

For eleven vertices, first suppose a balanced five-set is present. Its complement is $D$ by deletion. Switch the five-set to zero and describe its vertices by five binary columns of length six. Repeated columns are impossible: folding one of two such vertices onto the other is a simplicial homotopy equivalence preserving the signature class, contrary to $m_3\ge11$. Sorting columns and identifying simultaneous complements leaves 3,812,256 cases. Equations~\eqref{eq:cert} leave 140 positive templates, equivalent to four signatures. In the remaining branch, the complete ten-vertex catalogue has 7,079 representatives without a balanced five-set. Every normalized one-vertex extension is checked; the 1,512,433 extensions still lacking a balanced five-set all have zero third power.

A sixteen-vertex signature with nonzero fourth power has no balanced six-set, by $m_3\ge11$. Choose a balanced five-set and apply deletion and the eleven-vertex assertion to its complement. The vertices split into blocks of sizes $6,5,5$, with the first block equal to $D$. Both eleven-vertex restrictions belong to the full list of 140 templates with the same labelled $D$. The $140^2$ pairs leave 25 unknown cross edges. Every additional balanced five-set must have a positive eleven-vertex complement. Joining these finite relations gives 1,780 assignments; 1,690 contain a forbidden balanced six-set. Of the remaining 90, 80 are equivalent to $E$ and ten have zero fourth power. The asserted invariants of $E$ follow from its reconstructed face and boundary matrices.

For seventeen vertices with a balanced six-set, deletion gives a positive eleven-vertex complement and hence a partition $D\sqcup A\sqcup B$ of sizes $6,6,5$. The twelve-vertex block $D\sqcup A$ is third-power positive. Its six columns are either distinct or have exactly one repeated pair: fewer than five distinct columns could be folded to a positive complex on at most ten vertices. Sorting and complementation leave
\[
 \frac{\binom{64}{6}+\binom{32}{3}}2+\frac{5\binom{64}{5}}2=56,550,944
\]
inputs. Exact certificates give 11,467 positive templates and 264 orbits under the block-preserving action. Combining these with all 140 labelled eleven-vertex templates gives 36,960 pairs and 30 unknown edges. Deletion at every balanced five- or six-set supplies the necessary relations. A balanced seven-set is impossible: deletion would leave a third-power-positive complex on ten vertices, contrary to $m_3\ge11$. A separate reduced ordered binary decision diagram check verifies equality of the full relation join with the saved assignment set. It contains 119,722,601 assignments; excluding balanced seven-sets leaves 85,889,170, and checking six-set complements leaves 55,551. Cup certificates and explicit equivalences give the 244 classes. Vertex deletion tests give the 115 critical classes. The normalizations never quotient the two attachments independently by automorphisms of their common labelled block.
\end{proof}

For an explicit description of $E$, list the pairs in $\binom{\{0,\ldots,15\}}2$ lexicographically, with the first pair assigned to the least significant bit. Its signature is the hexadecimal integer
\[
 \texttt{001e87420f06214c0cc944dab35bc0}.
\]

\begin{corollary}\label{cor:rigidity}
If a complex $J$ on at most sixteen vertices has a class $\alpha\in H^1(J;\F)$ with $\alpha^4\ne0$, then $J$ has sixteen vertices and is isomorphic to $E$, preserving the distinguished class.
\end{corollary}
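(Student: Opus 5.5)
We reduce the corollary to Lemma~\ref{lem:catalogue}(3) through a compatible completion, and then show that $J$ must already fill out the whole balanced complex $E$.

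\emph{Vertex count and reduction to $E$.} Since $m_4\ge16$ by~\eqref{eq:recurrence}, $J$ has exactly sixteen vertices. Represent $\alpha$ by a cocycle $a\in Z^1(J;\F)$. Assign arbitrary values to the missing edges to obtain a compatible completion $s$. By~\eqref{eq:completion}, $J\subseteq B(s)$ and $[s]^4\ne0$. Lemma~\ref{lem:catalogue}(3) then gives an equivalence, namely a permutation and a switching, under which $B(s)=E$ and $s$ is the signature of $E$. Switching changes $a$ only by a coboundary, so after relabelling we may assume $J\subseteq E$ and $a=s|_J$, which is cohomologous to the original cocycle. The distinguished class is therefore preserved.

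\emph{Filling out $E$.} Choose a positive cycle $z\in Z_4(J;\F)$ with $\ip{a^4}{z}=1$. Since $J\subseteq E$, $z$ is also a $4$-cycle of $E$. Lemma~\ref{lem:catalogue}(3) says $Z_4(E)=\F\gamma_E$, and $z\ne0$, so $z=\gamma_E$. Hence every facet of $E$ lies in $J$. Because $E$ is pure of dimension four, every face of $E$ lies in some facet, and so $E\subseteq J$. Combined with $J\subseteq E$, this gives $J=E$ as simplicial complexes on the same labelled vertex set. Under the chosen relabelling, $\alpha$ corresponds to $[s]$, and by Lemma~\ref{lem:catalogue}(3) this class generates $H^1(E)$.

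\emph{Where the care is needed.} The only delicate point is keeping track of the class through the equivalence. A switching by $h$ changes $s$ by $\delta h$, and the restriction of $\delta h$ to $J$ is a coboundary on $J$. The identification therefore carries $\alpha$ exactly to the distinguished class $[s]$ of $E$, not to some other class.

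A second possible subtlety is that $J$ might have more than four dimensions of faces. This does not arise, because every face of $J$ is balanced, so $J\subseteq B(s)=E$. If $J$ were allowed more than sixteen vertices the argument would fail, but the hypothesis excludes this.
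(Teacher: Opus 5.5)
Your proposal is correct and follows essentially the same route as the paper: $m_4\ge16$ fixes the vertex count, any compatible completion is identified with $E$ by Lemma~\ref{lem:catalogue}(3), and since $Z_4(E)=\F\gamma_E$ a positive cycle of $J$ must equal $\gamma_E$, so $J$ contains every facet of the pure complex $E$ and therefore equals it. Your explicit tracking of the class under switching spells out a step that the paper's proof leaves implicit.
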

\begin{proof}
The bound $m_4\ge16$ gives the vertex count. Any compatible completion is $E$. A positive cycle in $J$ is a nonzero element of $Z_4(E)$, hence equals $\gamma_E$. Thus $J$ contains every facet of $E$ and equals $E$.
\end{proof}

\begin{definition}
A transversal column over $W$ is a function $t:W\to\F$, identified with the set $T_t=\{(w,t(w)):w\in W\}$ in a double cover. It covers a lifted face if it contains all its vertices. Opposite columns are $t$ and $t+1$.
\end{definition}

\begin{lemma}[Sixteen-vertex covering certificates]\label{lem:Ecover}
The characteristic cover of $E$ has 300 four-dimensional facets. Each column covers at most 48 of them. Their covering number is eight; a covering which contains a pair of opposite columns has at least nine columns.
\end{lemma}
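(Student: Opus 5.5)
This is a finite statement about the explicit complex $E$, so the plan is to reconstruct $E$ from its hexadecimal signature and verify each assertion with exact certificates. Two of the claims can be reduced to structural facts first.

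\textbf{Facet count.} By Lemma~\ref{lem:catalogue}, $E$ has $150$ facets. Each balanced face has exactly two potentials, giving two lifts. The characteristic cover therefore has $300$ four-dimensional facets. Concretely, I would switch so that a chosen vertex has all incident edges zero. Then, for each facet $F$, I would list its two potentials $t$ and $t+1$. Each is recorded as a set of pairs $(w,t(w))$ with $w\in F$.

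\textbf{Capacity bound of 48.} A column $t:V\to\F$ covers a lifted facet $(F,\tau)$ exactly when $t|_F=\tau$. Since opposite lifts have complementary potentials, each facet of $E$ is covered by $t$ in at most one lift. It is covered precisely when $s_{uv}=t_u+t_v$ for all $u,v\in F$. That is the same as saying $F$ is a facet of $E$ contained in a class of the partition of $V$ induced by $t$ after switching by $t$.

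So the count of facets covered by $t$ equals the number of facets of $E$ lying inside one of the two sets $t^{-1}(0)$ and $t^{-1}(1)$ of the switched signature $s+\delta t$. I would simply enumerate all $2^{16}$ columns, or $2^{15}$ up to complementation since $t$ and $t+1$ cover the opposite lifts of the same facets. For each I would count the covered facets and record that the maximum is $48$. A structural proof looks possible via the link structure of $E$, but exhaustive enumeration over $2^{15}$ columns and $150$ facets is trivial.

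\textbf{Covering number.} The lower bound of seven is immediate, since $\lceil 300/48\rceil=7$. Excluding seven requires real work. For an upper bound I would exhibit an explicit covering by eight columns and check it directly. For the lower bound I would set up a $0/1$ integer program with one variable per column ($2^{16}$ variables) and one covering constraint per lifted facet ($300$ constraints). An exact solver proves that no seven columns cover. Symmetry reduction by the automorphism group of $E$ and the antipodal involution should make this fast.

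To keep the certificate human-checkable, I would first try a dual argument: nonnegative integer weights on lifted facets such that every column has weight at most $W$, while the total weight exceeds $7W$. Here the weight of a column is the sum of the weights of the lifted facets it covers. This mirrors the ``integer weights'' certificates mentioned in the abstract. If the LP relaxation is too weak for that, I would fall back on a branch-and-bound enumeration with recorded branches.

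\textbf{Opposite pairs.} For the final claim, fix a pair $t,t+1$ and use the symmetry to normalize $t$ up to automorphisms of $E$. Together, $t$ and $t+1$ cover at most $96$ lifted facets: for each facet $F$, one lift if $F$ lies in one class of the switched partition, and the other lift for $t+1$. It then remains to show that the uncovered lifted facets cannot be covered by six further columns. I would run the same exact search, or find weight certificates, for each orbit of $t$, using the bound of $48$ to prune.

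The main obstacle is the exact lower bounds of eight and nine. The counting bound gives only seven, so a genuine combinatorial certificate is needed. Finding small dual weights that prove it, rather than relying on opaque solver output, is where I expect most of the effort to go.
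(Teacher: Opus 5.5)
Your plan is essentially the paper's: enumerate all $2^{16}$ columns for the load bound $48$, give an explicit eight-column cover, and exclude seven columns, and an opposite pair plus six further columns orbit by orbit, by a symmetry-reduced exhaustive search pruned by rational weight certificates. The paper first notes that a seven-column cover must contain a column of load at least $\lceil 300/7\rceil=43$, and these columns form only two orbits; as you anticipated, some of its weight bounds are exactly tight, so exact-cover branching is still needed.

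One slip to fix: $t$ covers a lift of $F$ exactly when $s+\delta t$ vanishes on every edge of $F$, not when $F$ lies in $t^{-1}(0)$ or $t^{-1}(1)$. So count with your criterion $s_{uv}=t_u+t_v$ rather than with the ``partition'' reformulation.
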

\begin{proof}[Finite verification]
Enumerating the $2^{16}$ columns gives maximum load 48. A seven-column covering would contain a column of load at least $\lceil300/7\rceil=43$. The 32 such columns form two verified automorphism orbits. Fixing a representative in each orbit leaves a six-column covering problem. A residual column with insufficient unweighted load is discarded; nonnegative rational weights on the remaining facets have total weight six and load at most one on every retained column. Equality forces an exact cover by tight columns. Complete branching on an uncovered positive-weight facet gives no solution in either case. An explicit eight-column covering verifies the upper bound.

For a prescribed opposite pair, the $2^{15}$ unordered pairs form 156 verified orbits. In 73 orbits more than $6\cdot48$ facets remain. In 77, rational weights of total greater than six and column load at most one exclude six further columns. In the remaining six orbits the total is six and complete exact-cover branching again gives no solution. In both searches, coverage of the zero-weight facets is also required at each completed leaf. Clearing denominators makes every check an integer calculation.
\end{proof}

\section{Transfer and induced cores}
Let $K$ triangulate $\RP^5$, let $a$ represent the nonzero class in $H^1(K;\F)$, and let $X\to K$ be the characteristic double cover, with involution $\tau$. Then $|X|\cong S^5$. Write $\Gamma$ for the sum of all five-dimensional facets of $X$.

We use the following local properties of a triangulated topological manifold. It is pure; each ridge has two cofacets; and the facets containing a fixed vertex are connected by adjacency through ridges containing that vertex. Indeed, for a point in the interior of a $k$-simplex $\sigma$, local homology gives
\[
 H_i(|K|,|K|\setminus\{x\})\cong\widetilde H_{i-k-1}(\lk_K\sigma).
\]
For a ridge this forces two link vertices. For a vertex, top homology of its link is one-dimensional; in a pure complex with two facets at each ridge, top cycles have constant coefficients on the components of the facet adjacency graph. Hence that graph is connected. These properties also hold in $X$. Since $X$ has no six-simplices, $Z_5(X)=\{0,\Gamma\}$.

\begin{lemma}\label{lem:transfer}
Let $z\in Z_4(K;\F)$ be positive. There exists $c\in C_5(X;\F)$ such that
\begin{equation}\label{eq:sides}
 \partial c=\Tr z,\qquad c+\tau c=\Gamma,
\end{equation}
where $\Tr$ sends a simplex to the sum of its two lifts.
\end{lemma}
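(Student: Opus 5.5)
Since $|X|\cong S^5$, we have $H_4(X;\F)=0$. Also $\partial\Tr z=\Tr\partial z=0$. So $\Tr z$ is a boundary: there is $c_0\in C_5(X;\F)$ with $\partial c_0=\Tr z$. The plan is to show that either $c_0$ or $c_0+\Gamma$ satisfies the second condition in~\eqref{eq:sides}.

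\textbf{Step 1 (the two constraints).} Because $\tau$ is simplicial and $\tau\Tr=\Tr$, we get $\partial(c_0+\tau c_0)=\Tr z+\Tr z=0$. Thus $c_0+\tau c_0\in Z_5(X)=\{0,\Gamma\}$. Replacing $c_0$ by $c_0+\Gamma$ does not change $\partial c_0$. It also does not change $c_0+\tau c_0$, because $\tau\Gamma=\Gamma$. So the dichotomy is intrinsic to $z$, and we must show that $c_0+\tau c_0=\Gamma$ rather than $0$.

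\textbf{Step 2 (ruling out $0$).} Suppose $c_0+\tau c_0=0$. Then $c_0$ is $\tau$-invariant. Since $\tau$ acts freely on simplices, an invariant chain is a transfer: $c_0=\Tr b$ for some $b\in C_5(K;\F)$. Because $\Tr$ is injective and commutes with $\partial$, we get $\partial b=z$. Hence $[z]=0$ in $H_4(K)$, which contradicts $\ip{a^4}{z}=1$, since $a^4$ is a cocycle. Therefore $c_0+\tau c_0=\Gamma$, and $c=c_0$ works.

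\textbf{Main point to check.} The delicate step is Step 2, namely the identification of $\tau$-invariant chains with transfers. This uses that $\tau$ has no fixed simplices, so that the lifts of each simplex of $K$ form a free orbit of size two. It also uses $\Tr\partial=\partial\Tr$ and the injectivity of $\Tr$ on chains. All of these follow directly from the covering being simplicial and free, so I expect no genuine obstacle. The positivity hypothesis enters only through $[z]\neq0$.
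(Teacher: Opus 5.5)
Your proposal is correct and follows essentially the same route as the paper. You solve $\partial c=\Tr z$ using $H_4(X)=0$, then note that $c+\tau c\in Z_5(X)=\{0,\Gamma\}$. You rule out $0$ by writing an invariant chain as $\Tr b$, so that injectivity gives $\partial b=z$, which contradicts $\ip{a^4}{z}=1$.
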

\begin{proof}
Transfer is an injective chain map, and its image consists of the invariant chains. Since $H_4(X)=0$, the first equation has a solution. The chain $c+\tau c$ is a cycle, hence is zero or $\Gamma$. If it were zero, $c=\Tr b$ for some five-chain $b$ on $K$. Injectivity would imply $\partial b=z$, contradicting $\ip{a^4}{z}=1$.
\end{proof}

\begin{proposition}\label{prop:columns}
Suppose a positive four-cycle $z$ is supported in $J=K[W]$. Let $R\subseteq\supp z$ consist of faces having no five-dimensional coface in $J$. Then $|V(K)\setminus W|$ columns cover both lifts of every face in $R$. They can be chosen so that, after switching only outside $W$, their cross-edge labels and an all-zero outside block extend the actual cocycle to a compatible completion of $K$.
\end{proposition}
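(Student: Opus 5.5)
Fix the chain $c$ from Lemma~\ref{lem:transfer}, so $\partial c=\Tr z$ and $c+\tau c=\Gamma$. The plan is to read the columns off from $c$ at the outside vertices. Since $c+\tau c=\Gamma$, every facet of $X$ lies in exactly one of $c$ and $\tau c$. For each outside vertex $u\in V(K)\setminus W$ with lifts $u^\pm$, consider the facets of $X$ containing a lift of $u$. I claim $c$ picks a consistent sign at each such vertex: among the facets through $u^+$ and $u^-$, $c$ contains exactly those through one lift, say $u^{\epsilon(u)}$.

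To prove the claim, note that $\partial c=\Tr z$ is supported over $W$. Hence every ridge through $u^+$ (a ridge containing a vertex outside $W$) is absent from $\partial c$. Each ridge has two cofacets, so the two facets at such a ridge are either both in $c$ or both out. By the vertex connectivity property, the facets containing $u^+$ are connected through ridges containing $u^+$, so $c$ either contains all of them or none. The equation $c+\tau c=\Gamma$ shows that exactly one of $u^+,u^-$ has its star in $c$. Define $\epsilon(u)$ accordingly. After switching at vertices outside $W$ (replacing $u^+$ by $u^-$), we may assume $\epsilon\equiv0$ on the outside block.

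Next I build the columns. For an outside vertex $u$ and $w\in W$, set $t_u(w)$ to be the sign of the lift of $w$ adjacent to $u^{+}$ when $uw$ is an edge; this agrees with the cocycle label $a(uw)$ after the switching. For missing edges the value is chosen so that the column works, as follows. Take a face $\sigma\in R$ and one lift $\tilde\sigma$; the other lift is $\tau\tilde\sigma$. Because $\sigma\in\supp z$, the lift $\tilde\sigma$ lies in $\partial c$, so exactly one of the two facets of $X$ containing $\tilde\sigma$ lies in $c$. Since $\sigma\in R$ has no $5$-coface in $J$, that facet has the form $\tilde\sigma\cup\{u^{\pm}\}$ with $u$ outside $W$. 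By the claim it is $\tilde\sigma\cup\{u^+\}$, so $\tilde\sigma$ spans a face with $u^+$ and its vertex signs are the labels of edges from $u$. Thus $\tilde\sigma\subseteq T_{t_u}$. Applying the same argument to $\tau\tilde\sigma$ gives some outside vertex $u'$ covering it. This yields $|V(K)\setminus W|$ columns covering both lifts of each face of $R$.

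Finally I check compatibility of the completion: take the cross labels $t_u(w)$, zero labels inside the outside block, and the actual cocycle on $W$. A face of $K$ meeting the outside lifts to $X$ through the plus lifts of its outside vertices, so it is balanced with potential $0$ on outside vertices and $t$ on $W$ vertices, provided its outside edges have label zero. The \textbf{main obstacle} I expect is exactly this point: showing that edges $uu'$ between outside vertices have label $0$ after switching, i.e.\ that adjacent outside plus-lifts are joined. I would argue this from the claim: a facet containing both $u^+$ and $u'^{\pm}$ lies in $c$, so it must contain $u'^+$. Every edge $uu'$ of the manifold $K$ lies in some facet, which closes the argument.
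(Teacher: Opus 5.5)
Your proposal is correct and follows the paper's proof essentially step for step: constancy of $c$ on the star of each outside lift via ridge-connectivity, choosing (by switching) the lift with coefficient one, covering each lift of $f\in R$ by the outside vertex of its coefficient-one cofacet, and showing that actual outside edges get label zero by extending each to a facet through the chosen lift. One small wording point: the column values on missing cross edges are genuinely arbitrary, since your covering argument only ever uses actual edges $uw$, so they need not be ``chosen so that the column works.''
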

\begin{proof}
Choose $c$ as in~\eqref{eq:sides}. At a lift of a vertex outside $W$, no incident ridge lies in $\Tr z$. Thus adjacent incident facets have equal coefficients in $c$, and local connectivity makes this coefficient constant throughout the star. The two lifts have opposite coefficients. Choose the lift with coefficient one. Its neighbors contain at most one lift of each vertex in $W$; complete this partial choice arbitrarily to a column.

At either lift of $f\in R$, the two cofacet coefficients sum to one. The cofacet with coefficient one has an additional vertex outside $W$, since $J$ is induced. That vertex is the chosen lift of an outside vertex, and its column covers the given lift of $f$.

If $uv$ is an actual outside edge, lift it from the chosen lift of $u$ and extend it to a facet. This facet has coefficient one, so its other endpoint is the chosen lift of $v$. Switching the chosen outside lifts to zero therefore makes every actual outside edge zero. Complete the missing outside edges by zero and the missing cross edges by the chosen columns. The resulting signature preserves every actual edge.
\end{proof}

\begin{proposition}[Exclusion of twenty-two vertices]\label{prop:22}
There is no triangulation of $\RP^5$ on twenty-two vertices.
\end{proposition}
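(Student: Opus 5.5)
Suppose $K$ triangulates $\RP^5$ on $22$ vertices, let $a$ represent the generator of $H^1(K;\F)$, and recall $[a]^5\ne0$. The plan is to find a sixteen-vertex induced core $J=K[W]$ isomorphic to $E$, apply Proposition~\ref{prop:columns} with the six outside vertices, and contradict the capacity bound of Lemma~\ref{lem:Ecover}.

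\textbf{Finding the core.} A positive five-cycle on $K$ is $\Gamma_K$, the sum of all facets. If the five vertices of some $4$-simplex met every facet, Lemma~\ref{lem:hitting} with $d=5$ would need $2^5+5$ vertices, so that lemma alone does not give the split. Instead I take a facet $\sigma=\{v_0,\ldots,v_5\}$ of $K$. The complement of $\sigma$ has $16$ vertices, and I want $a^4|_{K[V\setminus\sigma]}\ne0$ by Lemma~\ref{lem:deletion}. For that I need $a|_{K[\sigma]}=0$, which holds because $K[\sigma]$ is a simplex and hence contractible. Deletion with $A=\sigma$ and $D=V\setminus\sigma$ therefore gives $\alpha^4|_{K[D]}\ne0$ on sixteen vertices. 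Corollary~\ref{cor:rigidity} then says $J=K[D]\cong E$, with the restricted class equal to the distinguished one, and its positive cycle is $z=\gamma_E$.

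\textbf{Applying the columns.} Next I identify $R$, the facets of $E$ (the $4$-faces in $\supp z$) with no five-dimensional coface inside $J$. Since $E$ has dimension four, $J$ has no $5$-simplices, so $R$ consists of all $150$ facets of $E$. Proposition~\ref{prop:columns}, with $|V(K)\setminus W|=6$, then yields six columns over $W$ covering both lifts of every facet of $E$. That is, six columns cover all $300$ facets of the characteristic cover. Each column covers at most $48$ lifted facets by Lemma~\ref{lem:Ecover}, so six columns cover at most $6\cdot 48=288<300$, a contradiction.

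\textbf{Main obstacle.} The delicate point is confirming that the hypotheses of Proposition~\ref{prop:columns} genuinely hold. The positive cycle $z$ must be positive for the cocycle $a$ on $K$, not merely for some class on $J$. It is, because $a|_J$ is the class with nonzero fourth power and Corollary~\ref{cor:rigidity} identifies it with $s$. I also need the covered faces to be lifts in the characteristic cover of $E$ determined by $a|_J$, which again follows from the identification preserving the distinguished class. The remaining steps, Lemma~\ref{lem:transfer} inside Proposition~\ref{prop:columns} and the local manifold properties of $X$, are already established, so after this check the counting $288<300$ finishes the proof.
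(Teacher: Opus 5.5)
Your proof is correct and follows the paper's argument. You delete a six-vertex facet, whose induced subcomplex is a simplex, and apply deletion to get a sixteen-vertex complement. Corollary~\ref{cor:rigidity} identifies it with $E$, all 150 facets form $R$, and Proposition~\ref{prop:columns} gives six columns covering 300 lifted facets, contradicting the capacity $6\cdot48=288$. Your aside about Lemma~\ref{lem:hitting} misdescribes it: for $d=5$ it concerns a six-vertex $5$-simplex, not a $4$-simplex. That aside plays no role in the proof.
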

\begin{proof}
Choose a six-vertex facet $A$. Since $[a]^5\ne0$, deletion gives $[a]^4\ne0$ on $J=K[V(K)\setminus A]$, which has sixteen vertices. Corollary~\ref{cor:rigidity} gives $J=E$. Its fundamental cycle is positive, and none of its 150 facets has an internal five-dimensional coface. Proposition~\ref{prop:columns} supplies six columns covering all 300 lifted facets. Lemma~\ref{lem:Ecover} gives the contradiction
\[
 300\le6\cdot48=288.
\]
\end{proof}

For the rest of this section suppose $K$ has twenty-three vertices. No positive four-cycle can be supported on at most sixteen vertices: by Corollary~\ref{cor:rigidity} its induced support would be $E$, and Proposition~\ref{prop:columns} would cover its lifts by seven columns, contrary to Lemma~\ref{lem:Ecover}. It follows also that no compatible completion of $K$ contains a balanced seven-set. Such a set would make $a$ exact on the corresponding induced subcomplex, and deletion would give a positive four-cycle on its sixteen-vertex complement.

\begin{lemma}[Criticality of completions]\label{lem:critical}
If $|W|=17$ and $[a]^4|_{K[W]}\ne0$, every compatible completion of $K[W]$ is critical in degree four.
\end{lemma}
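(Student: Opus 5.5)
We argue by contradiction. By \eqref{eq:completion}, every compatible completion $s$ of $K[W]$ has $[s]^4\ne0$. So it suffices to show that deleting any vertex kills the fourth power. Fix $w\in W$ and a compatible completion $s$, and suppose the restriction $s'=s|_{W\setminus\{w\}}$ still has $[s']^4\ne0$. Then Lemma~\ref{lem:catalogue}(3), through Corollary~\ref{cor:rigidity} applied to $B(s')$, shows that $B(s')\cong E$ with the distinguished class. Hence $K[W\setminus\{w\}]\subseteq E$. The remark just before the lemma says that no positive four-cycle of $K$ lives on at most sixteen vertices. So the contradiction will follow once we produce a positive four-cycle of $K$ supported in $K[W\setminus\{w\}]$, that is, once we show $[a]^4|_{K[W\setminus\{w\}]}\ne0$.

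To transfer nonvanishing from $s'$ back to $a$, we use a cochain extension argument. Suppose instead that $a^4|_{K[W\setminus\{w\}]}=\delta b$ for some $3$-cochain $b$. Order the vertices with $w$ first. Switch the cocycle on $K[W]$ by the potential of the closed star of $w$ (the star is balanced, hence exact), so that $a(wv)=0$ for every neighbor $v$ of $w$. Switching changes neither $[a]$ nor the fact that $s$ is a completion. By \eqref{eq:cup}, $a^4$ now vanishes on every $4$-simplex containing $w$. Extend $b$ to $K[W]$ by setting it to zero on simplices containing $w$. Then $\delta b$ agrees with $a^4$ away from $w$, and on a simplex $w\sigma$ it equals $b(\sigma)$. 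The goal is therefore to modify $b$ by a cocycle on $K[W\setminus\{w\}]$ so that it vanishes on the $3$-simplices of $\lk_{K[W]}w$. This would give $[a]^4|_{K[W]}=0$, a contradiction.

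Achieving this vanishing on the link is the main obstacle. We would handle it inside $E$, where all data are explicit. Since $K[W\setminus\{w\}]\subseteq E$, the link of $w$ is contained in a subcomplex of $E$ that is balanced together with $w$. Each such subcomplex corresponds to a column of the characteristic cover of $E$. Using $H^1(E)=\F[s]$, $Z_4(E)=\F\gamma_E$, and the fact that each ridge of $E$ has two cofacets, we would show the following: a positive cycle $z\subseteq K[W]$ with $\ip{s^4}{z}=1$ decomposes as $z_0+w\ast y$, where $\partial z_0=y$ lies in the link. Evaluating $s^4$ then reduces to evaluating it on $z_0$, a chain in $E$ whose boundary lies in a single balanced column. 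The pairing should then be computable against $\gamma_E$ relative to that column.

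If this local argument proves awkward, the fallback is Proposition~\ref{prop:columns} applied with $W'=W\setminus\{w\}$. The seven outside vertices would supply columns covering both lifts of the faces of a positive cycle. We would then either derive a positive cycle on sixteen vertices or contradict Lemma~\ref{lem:Ecover}, since seven columns cannot cover the $300$ lifted facets when the covering number is eight.
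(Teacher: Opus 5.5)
\textbf{There is a genuine gap.} The statement you reduce to, $[a]^4|_{K[W\setminus\{w\}]}\ne0$, cannot be obtained from data on $W$ alone. It fails for abstract seventeen-vertex complexes that satisfy every hypothesis you use.

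Here is such a complex. Put the signature of $E$ on $U=W\setminus\{w\}$ and add a vertex $w$ with $s(wu)=t(u)$, where $t$ is a column of positive load. Let $M\ne\varnothing$ be the facets of $E$ on which $s=\delta t$, and let $P$ be the remaining facets. Let $z_0$ be the sum of the facets of $P$, and set $J=P\cup w*\supp(\partial z_0)$.
\begin{itemize}
\item $z=z_0+w*\partial z_0$ is a cycle of $J$.
\item $z+\gamma_E=z_M+w*\partial z_M$ lies on a complex where $s=\delta t$ (with $t(w)=0$) is exact. Hence $\ip{s^4}{z}=\ip{s^4}{\gamma_E}=1$.
\item $s$ is a compatible, non-critical completion of $J$.
\item Yet $J[U]$ is generated by the facets of $P$ alone, so $[s]^4|_{J[U]}=0$ by Corollary~\ref{cor:rigidity}.
\end{itemize}
So the modification of $b$ on $\lk w$ that you call the main obstacle does not exist in general. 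Your proposed evaluation against $\gamma_E$ relative to the column $t$ only reproduces this consistent picture. Any contradiction must use the vertices outside $W$ and the manifold structure of $K$.

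\textbf{The fallback does not close the gap.} Proposition~\ref{prop:columns} with $W'=W\setminus\{w\}$ needs a positive cycle supported in $K[W']$. That is exactly what is unavailable; if it existed, the sixteen-vertex obstruction would already finish the argument.

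The paper instead applies Lemma~\ref{lem:transfer} to a positive cycle $z=z_0+w*\partial z_0$ on all of $W$.
\begin{itemize}
\item Only the six vertices outside $W$ give columns. The lifts of $w$ meet $\Tr z$, so $c$ need not be constant on their stars, and $w$ yields no single column.
\item In its place one uses the opposite pair $t,t+1$, with $t(u)=a(wu)$ for every actual neighbour $u$. This pair covers the lifts of facets of $P$ whose coefficient-one cofacet passes over $w$.
\item The facets in $M$ are not in $\supp z$ at all. Covering them requires proving $s|_M=\delta t|_M$. That proof uses three facts: $P$ spans all sixteen vertices and is connected (both from the sixteen-vertex obstruction), $P\cap M=L$, and $H^1(E)=\F[s]$.
\end{itemize}
The outcome is eight columns containing an opposite pair. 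Your count of seven columns against a covering number of eight therefore gives nothing. The contradiction comes from the second assertion of Lemma~\ref{lem:Ecover}: a covering containing an opposite pair needs at least nine columns.
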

\begin{proof}
Suppose deleting $v\in W$ from such a completion leaves nonzero fourth power. By Lemma~\ref{lem:catalogue}, the restriction on $U=W\setminus\{v\}$ is $E$ after a single relabelling and switching. Denote this signature by $s$. Choose an actual positive cycle $z$ on $W$, and write
\[
 z=z_0+v*\partial z_0,
\]
where $z_0$ uses only vertices of $U$. Let $P$ be generated by the four-simplices in $\supp z_0$, $L$ by the support of $\partial z_0$, and $M$ by the other facets of $E$. The complex $P$ contains all sixteen vertices: otherwise $z$ would use at most sixteen. It is connected: if not, adjoining to each component its boundary coned to $v$ splits $z$ into actual cycles, one of which is positive and omits a component. The same sixteen-vertex obstruction applies. Also $M\ne\varnothing$, since otherwise $z=\gamma_E$.

We have $E=P\cup M$ and $P\cap M=L$. The latter equality holds on ridges because each ridge has two cofacets. For a lower-dimensional common face, connect a facet of $P$ to a facet of $M$ through facets containing that face; some consecutive pair meets in a mixed ridge. Thus the equality holds in every dimension. Since $P$ contains every vertex, every vertex of $M$ belongs to $L$.

Define $t(u)=a(vu)$ for \emph{every actual neighbor} $u\in U$ of $v$, and extend $t$ arbitrarily to the remaining vertices. On $L$, the triangles with vertex $v$ give $s=\delta t$. Define a cochain $s'$ on $E$ by $s'=s$ on $P$ and $s'=\delta t$ on $M$. These definitions agree on $L$, and $s'$ is a cocycle. Deleting $v$ from the positive complex $P\cup(v*L)$ gives $[s]^3|_P\ne0$, so $[s']\ne0$. Since $H^1(E)=\F[s]$, write $s'+s=\delta h$. The function $h$ is constant on the connected spanning complex $P$, hence on all vertices of $E$. Consequently
\begin{equation}\label{eq:potential}
 s|_M=\delta t|_M.
\end{equation}

Apply Lemma~\ref{lem:transfer} to the same cycle $z$. The six vertices outside $W$ give six columns on $U$. Add the opposite columns $t,t+1$. Equation~\eqref{eq:potential} shows that this pair covers both lifts of every facet of $M$. For a facet of $P$, either lift lies in $\Tr z$ and has a coefficient-one cofacet. The extra vertex cannot lie in $U$, since $E$ is four-dimensional. If it lies over $v$, the lift is covered by $t$ or $t+1$, by the choice of $t$ on all actual neighbors. Otherwise it is covered by one of the six outside columns. We have covered all 300 facets with eight columns including an opposite pair, contradicting Lemma~\ref{lem:Ecover}.
\end{proof}

\section{The seventeen-vertex obstruction}
We first give the certificate criterion which ensures that faces of a completion are present in an actual positive cycle.

\begin{lemma}\label{lem:forced}
Let $f$ be a four-dimensional face of $B(s)$. If a three-cochain $b_f$ satisfies
\begin{equation}\label{eq:forced}
 \delta b_f=s^4+\mathbf1_f,
\end{equation}
then every positive four-cycle contains $f$, and $f$ has no five-dimensional coface.
\end{lemma}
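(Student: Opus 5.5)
Both conclusions come from pairing the identity~\eqref{eq:forced} with suitable four-cycles. Here ``positive four-cycle'' means a cycle $z$ in the complex under consideration, either $B(s)$ itself or an actual complex $J\subseteq B(s)$ carrying the restricted cocycle. In every case $z$ is a four-cycle of $B(s)$, since restriction of cochains commutes with $\delta$. So it suffices to argue inside $B(s)$, with $s^4$ the cup cochain~\eqref{eq:cup} and $\mathbf 1_f$ the indicator cochain of the single four-simplex $f$.

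For the first assertion, I take a positive cycle $z\in Z_4$ with $\ip{s^4}{z}=1$. I evaluate both sides of~\eqref{eq:forced} on $z$. The left side gives $\ip{\delta b_f}{z}=\ip{b_f}{\partial z}=0$, because $z$ is a cycle. Hence
\[
 0=\ip{s^4}{z}+\ip{\mathbf 1_f}{z}=1+z(f),
\]
where $z(f)\in\F$ is the coefficient of $f$ in $z$. Therefore $z(f)=1$, so $f\in\supp z$. The only thing to verify is that restricting $b_f$ to a subcomplex containing $\supp z$ preserves the identity on four-simplices of that subcomplex. This holds because the coboundary is evaluated facewise.

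For the second assertion, I suppose $g$ is a five-simplex of $B(s)$ having $f$ as a face. I then evaluate~\eqref{eq:forced} on the boundary $\partial g$, which is a four-cycle, and get
\[
 0=\ip{\delta b_f}{\partial g}=\ip{s^4}{\partial g}+\ip{\mathbf 1_f}{\partial g}=\ip{\delta(s^4)}{g}+1 .
\]
The key step is that $s^4$ is a cocycle. The cup product of cocycles is a cocycle: by the Leibniz rule, or directly by expanding $\delta(s^4)[v_0,\dots,v_5]$ with formula~\eqref{eq:cup}, where the alternating sum telescopes using $s(v_iv_{i+1})+s(v_{i+1}v_{i+2})=s(v_iv_{i+2})$. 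This expansion needs the balance condition on the triangles of $g$, which holds because $g$ is a face of $B(s)$ and so all its triples are balanced. So $\ip{\delta(s^4)}{g}=0$, and the display reads $0=1$, a contradiction. Hence $f$ has no five-dimensional coface in $B(s)$, and \emph{a fortiori} none in any actual complex contained in it.

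I expect no step to be a real obstacle. The only care point is the second assertion: it requires $s^4$ to be a genuine cocycle on $B(s)$, which uses that $s$ is a cocycle there, i.e.\ that balanced sets are exactly where $\delta s$ vanishes on triangles. Alternatively, the second assertion follows from the first by applying it to the positive cycle $z+\partial g$. That cycle has the same pairing with $s^4$, since $\ip{s^4}{\partial g}=\ip{\delta s^4}{g}=0$, yet its coefficient on $f$ differs from that of $z$. Both $z$ and $z+\partial g$ would then have to contain $f$, which is a contradiction.
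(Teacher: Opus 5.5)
Your proposal is correct and is essentially the paper's proof: pairing \eqref{eq:forced} with a positive cycle forces coefficient one at $f$. Evaluating on $\partial g$ is the same as the paper's step of applying $\delta$ to get $\delta\mathbf1_f=0$, and you also make explicit why $s^4$ is a cocycle, namely that every triangle of $B(s)$ is balanced. Only your closing alternative via $z+\partial g$ needs an extra input, the existence of some positive cycle, which the hypothesis alone does not guarantee (take $s\equiv0$ on five vertices and $b_f$ the indicator of a three-face of $f$); your main argument does not depend on it.
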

\begin{proof}
Pairing~\eqref{eq:forced} with a positive cycle gives its coefficient at $f$ equal to one. Taking a coboundary gives $\delta\mathbf1_f=0$; a five-simplex containing $f$ would give value one, a contradiction.
\end{proof}

The same conclusion follows if $H_4(B(s))=\F$, $z_*$ is a positive cycle, and $f\in\supp z_*$ has no five-dimensional coface. Indeed, any other positive cycle differs from $z_*$ by a boundary, whose coefficient at $f$ is zero. This is the criterion used by the independent rank verification.

For a set $R$ of such faces and nonnegative integer weights $w_f$, put
\begin{equation}\label{eq:capacity}
 W=\sum_{f\in R}w_f,\qquad
 \ell(t)=\sum_{\substack{f\in R\\T_t\text{ contains a lift of }f}}w_f,
 \qquad L=\max_t\ell(t).
\end{equation}
A column contains at most one of the two lifts of a nonempty face. Six columns covering both lifts of every face in $R$ must therefore satisfy $2W\le6L$.

\begin{lemma}[Critical covering certificates]\label{lem:115}
Each of the 115 critical signatures in Lemma~\ref{lem:catalogue} admits a family $R$ of forced faces without five-dimensional cofaces and integer weights as above. For 104 signatures, $2W>6L$. For each of the other eleven, there is no six-column covering of the two lifts of $R$ whose extension by an all-zero outside block contains no balanced seven-set.
\end{lemma}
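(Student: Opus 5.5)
This is a finite verification, so the plan is to describe how to produce and check exact certificates for each of the 115 critical signatures from Lemma~\ref{lem:catalogue}. For each signature $s$ on seventeen vertices we first reconstruct $B(s)$ together with its boundary matrices and the cup vector $c_4$ from~\eqref{eq:cup}.

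\emph{Forced faces.} For each four-face $f$ of $B(s)$ without a five-dimensional coface, we decide whether it is forced. We solve $\partial_4^Tb_f=c_4+\mathbf 1_f$ over $\F$ by Gaussian elimination. When a solution exists, $b_f$ is stored as the certificate in~\eqref{eq:forced}, and Lemma~\ref{lem:forced} places $f$ in $R$. As an independent check, we compute $\dim H_4(B(s))$. When this dimension is one, we fix a positive cycle $z_*$ and take $R$ to be the faces of $\supp z_*$ with no five-dimensional coface, using the remark after Lemma~\ref{lem:forced}. The two constructions should give the same $R$.

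\emph{Capacity certificate.} Next we compute the lifts of every face in $R$; their vertex sets are the two potentials of a balanced set. We then look for integer weights with $2W>6L$. The plan is to solve the fractional linear program that maximizes $W$ subject to $\ell(t)\le1$ for every column $t\in\F^{17}$. Only columns containing a lift of some face in $R$ matter, so we generate the constraints lazily: repeatedly find the column of maximum load by branch-and-bound over the $2^{17}$ columns, or by full enumeration, which is cheap. If the optimum exceeds $3$, we scale to integers and record the weights. The certificate check then needs only one exhaustive pass over all $2^{17}$ columns, computing $L$ exactly in integer arithmetic and confirming $2W>6L$. We expect this to settle 104 cases.

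\emph{Residual cases.} For the eleven signatures whose LP optimum is at most $3$, we carry out an exact-cover search. Any covering must use $6$ columns covering all $2|R|$ lifted faces. We order the search by choosing an uncovered lift and branching over the columns that contain it. We prune with the residual weighted bound: if the weight still uncovered exceeds the remaining number of columns times the maximum residual load, the branch is closed. We quotient by automorphisms of $s$ and by the order of the columns. At each leaf that covers $R$, we form the completion on twenty-three vertices: the cross edges are given by the columns, and the outside block is all zero. We then test whether it contains a balanced seven-set, which means checking cliques in the balanced graph. The claim is that every leaf contains one, so no admissible covering exists.

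The main obstacle is this residual search in the eleven tight cases. Here the counting bound is exactly tight or nearly so, and many genuine six-column coverings may exist, each of which must be shown to produce a balanced seven-set. To keep the search finite, we would first try strengthening $R$ with additional forced faces of lower dimension, or with weighted faces of forced lifts. If the search is still too large, we would impose the seven-set condition early: any set of columns that already forms, together with a balanced set of $W$, a balanced seven-set can be closed immediately. All arithmetic is kept over $\mathbb Z$ or $\F$, so each certificate can be checked independently.
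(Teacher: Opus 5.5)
Your plan is correct and has the same overall structure as the paper. You take forced faces from a positive cycle after checking $\dim H_4=1$. Your cochain certificates and the rank criterion do give the same $R$: for a four-face $f$ with no five-dimensional coface, $s^4+\mathbf 1_f$ is a cocycle, and it is a coboundary exactly when $f\in\supp z_*$. You certify the weights with one exact pass over all $2^{17}$ columns, and you run an exhaustive search that branches on uncovered lifts for the eleven tight cases.

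The real difference is how the balanced-seven-set condition enters the residual search, and the paper's treatment removes what you call the main obstacle. Every face of $R$ is a balanced five-set. With an all-zero outside block, two columns containing the same lift of a face of $R$ therefore already give a balanced seven-set: the two outside vertices plus that five-set. Likewise, no single column may contain a lift of a balanced six-set. More generally, $k$ columns must not share a lift of a balanced $(7-k)$-set, and this is also the cleanest way to phrase your leaf test, since balancedness is a condition on triples rather than on edges.

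Consequently, an admissible covering covers each lift of a positive-weight face exactly once, so $\sum_j\ell(t_j)=2W$. With $\Delta=6L-2W<L$, every chosen column has load at least $L-\Delta>0$. This also rules out repeated columns and coverings by fewer than six columns. The paper imposes these restrictions at every node, and the whole residual search has $1{,}012$ nodes.

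Your bound on newly covered weight gives the same floor $L-\Delta$ for the first column. Afterwards, however, it tolerates overlaps of total weight up to $\Delta$ and postpones their rejection to the leaves. Your search is therefore still exhaustive, only larger. The worry that many genuine coverings must each be refuted largely disappears, because any double-covered lift is refuted at once by a two-column seven-set.

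Two small cautions:
\begin{itemize}
\item An ordering restriction on the selected columns must not be combined naively with branching on an uncovered lift. The column covering that lift may precede the ones already chosen, and the paper imposes no such restriction.
\item Adding lower-dimensional faces to $R$ is not supported by Proposition~\ref{prop:columns}, which only concerns four-faces of $\supp z$.
\end{itemize}
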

\begin{proof}[Finite verification]
For every signature, reconstruct the boundary matrices, verify $\dim H_4=1$, and check a positive cycle. Its faces without five-dimensional cofaces give the required forced faces. Evaluate~\eqref{eq:capacity} on all $2^{17}$ columns. The strict inequality holds for 104 signatures. For example, two certificates have $(W,L)=(611,201)$ and $(62,20)$, giving $1222>1206$ and $124>120$.

The eleven remaining indices in the critical catalogue are
\[
 3,16,18,39,42,53,54,59,67,86,105.
\]
For these certificates, $5L<2W\le6L$. Set $\Delta=6L-2W$. A balanced seven-set consisting of $k$ outside vertices and $7-k$ core vertices exists precisely when the corresponding $k$ columns contain the same lift of a balanced $(7-k)$-set. This condition is checked for every $1\le k\le6$. In particular, two columns cannot cover the same lift of any positive-weight face. A valid covering would thus satisfy
\begin{equation}\label{eq:deficit}
 \sum_{j=1}^6\ell(t_j)=2W,\qquad
 \sum_{j=1}^6(L-\ell(t_j))=\Delta.
\end{equation}
Every selected column has load at least $L-\Delta$.

Enumerate all columns, retain those satisfying this load bound and the one-column restriction, and branch on an uncovered lift of a face in $R$, including faces of weight zero. Every possible completion contains a candidate covering this lift, so branching over all such candidates is exhaustive. After adjoining a column, check the remaining deficit and every new common-lift restriction. Reject a branch if a required lift has no candidate, the deficit budget is exceeded, or six selected columns leave a required lift uncovered. All retained loads are positive and $2W>5L$, so no valid covering repeats a column or ends with fewer than six. This computation has 1,012 nodes and no successful leaf; 997 nodes belong to index 67. Every rejection is checked from its explicit missing lift, deficit, or balanced seven-set. No symmetry restriction on the order of selected columns is used.
\end{proof}

\begin{proof}[Proof of Theorem~\ref{thm:main}]
Equation~\eqref{eq:AM} excludes at most twenty-one vertices, and Proposition~\ref{prop:22} excludes twenty-two. Suppose that $K$ has twenty-three vertices. Since $[a]^5\ne0$ and $23<2^5+5$, Lemma~\ref{lem:hitting} gives two disjoint six-vertex simplices $A,C$ in a positive five-cycle. Deleting $A$ gives an actual induced seventeen-vertex complex
\[
 J=K[V(K)\setminus A],\qquad [a]^4|_J\ne0,
\]
which contains $C$. Any compatible completion of $J$ is fourth-power positive, contains a balanced six-set, and is critical by Lemma~\ref{lem:critical}. Thus it belongs to one of the 115 classes.

Transport its signature, lifted faces, and columns by the same permutation and switching. The forced faces of Lemma~\ref{lem:115} occur in every actual positive cycle and have no internal five-dimensional coface. Proposition~\ref{prop:columns} supplies six columns covering both lifts of each. In 104 classes this contradicts $2W>6L$. In the other eleven, complete the outside block by zero as in Proposition~\ref{prop:columns}. No compatible completion contains a balanced seven-set, by the sixteen-vertex obstruction following Proposition~\ref{prop:22}. The last assertion of Lemma~\ref{lem:115} gives the contradiction. Hence at least twenty-four vertices are necessary. The construction in~\cite{GSY} attains this number.
\end{proof}

\section*{Computational verification}
The computational part consists of the classification in Lemma~\ref{lem:catalogue} and the covering certificates in Lemmas~\ref{lem:Ecover} and~\ref{lem:115}. Cup products, boundary equations, equivalence maps, and loads are checked with exact arithmetic. Catalogue coverage is checked separately from the properties of the listed representatives. In particular, the seventeen-vertex assertion concerns signatures containing a balanced six-set; no classification of the complementary branch is used.

The compact ancillary files verify the final signed complexes, covering bounds, and residual search certificates. The exhaustive classification records are retained separately; the accompanying manifest identifies their inputs, programs, and verification records. The present manuscript uses those recorded classification runs and does not assert a fresh execution of every enumeration. Several intermediate statements have Lean~4 proofs, but the global theorem is not presently an end-to-end Lean formalization.

\section*{Acknowledgements}
The author thanks his advisor, Jiming Ma, and Fangting Zheng for their guidance.

\section*{Computational assistance}
OpenAI Codex assisted with proof exploration, literature searches, verification code, and preparation of the manuscript. The mathematical argument and the finite verification procedures are specified above and in the accompanying files.

\end{document}